\date{}
\theoremstyle{plain}
      \newtheorem{theorem}{Theorem}[section]
      \newtheorem{lemma}[theorem]{Lemma}
            \newtheorem{claim}[theorem]{Claim}
      \newtheorem{problem}[theorem]{Problem}
\theoremstyle{definition}
\theoremstyle{remark}
\title{Cliques with many colors in triple systems}
\author{Dhruv Mubayi\thanks{Department of Mathematics, Statistics, and Computer Science, University of Illinois, Chicago, IL, 60607 USA.  Research partially supported by NSF grant DMS-1763317. Email: {\tt mubayi@uic.edu.}}
\and Andrew Suk\thanks{Department of Mathematics, University of California at San Diego, La Jolla, CA, 92093 USA. Research partially supported by an NSF CAREER award and an Alfred Sloan Fellowship. Email: {\tt asuk@ucsd.edu}.} }
\begin{document}

\maketitle

\begin{abstract}
Erd\H os and Hajnal constructed a 4-coloring of the triples of an $N$-element set such that every $n$-element subset contains 2 triples with distinct colors, and $N$ is double exponential in $n$. Conlon, Fox and R\"odl asked whether there is some integer $q\ge 3$ and a  $q$-coloring of the triples of an $N$-element set such that every $n$-element subset has 3 triples with distinct colors, and $N$ is double exponential in $n$.  We make the first nontrivial progress on this problem by providing  a $q$-coloring with this property for all $q\geq 9$,  where $N$ is exponential in $n^{2+cq}$ and $c>0$ is an absolute constant.
\end{abstract}

\section{Introduction}

The Ramsey number $r_k(n;q)$ is the minimum integer $N$ such that for any $q$-coloring of the
$k$-tuples of an $N$-element set $V$, there is a subset $A\subset V$ such that all of the $k$-tuples of $A$ have the same color. Estimating $r_3(n;2)$ is one of the most central problems in combinatorics.  The best known bounds, due to Erd\H os, Hajnal
and Rado \cite{ER,EHR}, state that there are positive constants $c$ and $c'$ such that

\begin{equation}\label{3ramsey}
    2^{cn^2} < r_3(n;2) < 2^{2^{c'n}}.
\end{equation}

\noindent Erd\H os conjectured that the upper bound is closer to the truth, namely, $r_3(n;2)$ grows double exponentially in $\Theta(n)$, and he even offered a \$500 reward for a proof.  His conjecture is supported by the fact that a double exponential growth rate is known when we have 4 colors \cite{EH72,EHR}, that is, for fixed $q\geq 4$

\begin{equation}\label{tight}r_3(n;q)= 2^{2^{\Theta(n)}}.\end{equation}

In this paper, we study the following generalization of $r_3(n;q)$.  For integers $n > q \geq t\geq 2$, let $f(n;q,t)$ denote the maximum integer $N$ such that there is a $q$-coloring of the triples of an $N$-element set $V$ with the property that every subset of $V$ of size $n$ induces at least $t$ distinct colors. Thus when $ t= 2$, we have

$$f(n;q,2) = r_3(n;q) - 1,$$

\noindent and for $q\geq t\geq 3$, we have $f(n;q,t) < r_3(n;q)$.  When $t = 3$, Conlon, Fox, and R\"odl raised the following problem \cite{CFR}.

\begin{problem}[Conlon-Fox-R\"odl]\label{cfr}
Is there an integer $q\geq 3 $ and a positive constant $c$ such that $f(n;q,3)  > 2^{2^{cn}}$ holds for all $n > 2$?

\end{problem}
 
A simple application of the Probabilistic Method (see \cite{AS}) shows that $f(n;q,3)> 2^{cn^2}$, where $c = c(q)$.  Our main result is the following.

\begin{theorem}\label{main2}

There is an absolute constant $c>0$ such that for all integers $n > q \geq 9$, 

$$f(n;q,3) \geq 2^{n^{2 + c\cdot q  }}.$$

\end{theorem}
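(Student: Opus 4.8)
The plan is to build the desired $q$-coloring of the triples of an $N$-element set by a product (or ``stepping-up''/composition) construction, iterating a gadget coloring a constant number of times so that $N$ becomes exponential in a polynomial in $n$ whose degree grows linearly in $q$. Concretely, I would start from a base coloring on a ground set of size roughly $2^{n^2}$ coming from the probabilistic lower bound $f(n;q',3) > 2^{cn^2}$ (or a tailored explicit variant), and then amplify it. The key structural idea for the ``3 distinct colors in every $n$-set'' property is to linearly order the vertex set and, for a triple $v_1 < v_2 < v_3$, decompose its color into (i) data depending only on coordinates where certain pairs first differ (as in the classical Erd\H os--Hajnal/stepping-up colorings used for $r_3(n;q)$), together with (ii) an auxiliary ``local'' coloring applied inside the blocks. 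I expect to reserve a few colors (this is why $q \geq 9$ rather than $q \geq 3$) so that one subset of colors certifies the first distinct pair of colors via the stepping-up mechanism, and an independent reserve of colors certifies a third distinct color via the recursive inner coloring; combining the two mechanisms on disjoint color palettes is what forces $t = 3$.

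The main steps, in order, would be: first, set up notation for a single amplification step — given a $q$-coloring $\chi$ on $[M]$ such that every $n$-subset sees at least $3$ colors, define a new coloring $\chi'$ on $[M]^{s}$ (or on a set of size $2^M$ in a genuine double-exponential step-up, but here on a polynomial-power set to get the stated bound) using the ``first difference'' coordinate. Second, prove the transference lemma: if $A \subseteq [M]^s$ has size $n$, then either its projection to some coordinate already contains an $n'$-set witnessing $3$ colors (induction hypothesis), or the set of ``first-difference patterns'' among $A$ is rich enough that the stepping-up part of $\chi'$ exhibits $\geq 3$ colors on $A$ directly; a Ramsey/pigeonhole argument on the ordered structure gives the needed $n' = n'(n)$, and tracking how $n'$ shrinks per step controls the exponent. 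Third, iterate the step a constant number $\Theta(q)$ of times, so that the size goes from $2^{cn^2}$ up through $2^{(cn^2)^{O(1)}}$, carefully accounting for the polynomial blow-up in $n$ at each level, to reach $N = 2^{n^{2+cq}}$. Fourth, verify the color count stays at $q$ throughout: each amplification must not increase the number of colors, which is where the specific choice of how stepping-up data and inner colors are merged (e.g., reusing the same palette with a careful disjointness bookkeeping) matters, and is the reason the construction needs $q$ somewhat large.

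The hard part will be the transference lemma — specifically, arranging the coloring $\chi'$ so that whenever the projection argument fails (i.e., no coordinate's projection is large enough to invoke induction, which happens when the $n$-set is ``spread out'' across coordinates), the stepping-up coordinate-comparison coloring is guaranteed to produce three distinct colors, not merely two. Forcing the third color is exactly the obstruction that makes Problem~\ref{cfr} difficult, and the classical Erd\H os--Hajnal $4$-coloring only guarantees two; overcoming this seems to require genuinely using the extra colors (hence $q \geq 9$) to run two ``independent'' witnessing schemes and an argument that at least one of them fires with a \emph{new} color. I would handle this by a case analysis on the configuration of first-difference coordinates within the $n$-set: in the ``monotone/consistent'' case the stepping-up colors behave like the classical construction and yield $\geq 2$ colors, while an inner recursive coloring on the densest block yields a third; in the ``non-monotone'' case the coordinate comparisons themselves already span $\geq 3$ colors. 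Quantifying the trade-off between how much $n$ must shrink to guarantee a dense block versus to guarantee non-monotonicity is where the exponent $2 + cq$ (rather than something cleaner) comes from, and getting a clean induction closing this trade-off is the technical crux.
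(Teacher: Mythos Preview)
Your architectural outline matches the paper's approach closely: a probabilistic base case $f(n;3,3) \geq 2^{cn^2}$, a single-step recursion of stepping-up flavor that trades a bounded number of colors for a polynomial gain in the exponent, and iteration $\Theta(q)$ times. The paper's recursion is precisely
\[
f(n;q,3) \geq \bigl(f(\lfloor n/\log n\rfloor;\, q-6,\, 3)\bigr)^{\,n^{1/4}/2},
\]
so each step spends six colors and gains a factor of roughly $n^{1/4}$ in the exponent; after $\lfloor (q-3)/6 \rfloor$ iterations this yields the stated bound.

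However, your sketch of the transference lemma has a genuine gap at exactly the point you flagged as hard. You propose a dichotomy: if the $n$-set is concentrated in some coordinate, the inner recursive coloring produces three colors; otherwise the ``non-monotone'' stepping-up data already spans three colors. The second half does not follow from the classical stepping-up coloring, which records only the sign of $\delta_1 - \delta_2$ and hence produces at most two colors no matter how non-monotone the $\delta$-sequence is. Concretely: if the only two colors present on an $n$-set are one ``$\delta_1 > \delta_2$'' color and one ``$\delta_1 < \delta_2$'' color, nothing in your scheme forces a third to appear, and the inner coloring is never invoked on such a set since no triple there has $\delta_1 = \delta_2$. Your parenthetical about ``reusing the same palette'' does not help here, because the issue is not palette size but the lack of any mechanism distinguishing more than two stepping-up patterns.

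The ingredient you are missing is this: on the coordinate set $V_2$ (taken of size about $n^{1/4}$) the paper first fixes a random $3$-coloring $\phi_2$ of the \emph{pairs} of $V_2$ with the property that every $s = \lfloor \log n\rfloor$-subset of $V_2$ sees all three $\phi_2$-colors. The stepping-up coloring of a triple $v_1 < v_2 < v_3$ then records \emph{both} the sign of $\delta_1 - \delta_2$ \emph{and} the value $\phi_2(\delta_1,\delta_2)$, using $2 \times 3 = 6$ colors; the remaining $q-6$ colors are used for the inner (equal-$\delta$) case. Now if an $n$-set sees only two stepping-up colors, one from each sign, they must share a single $\phi_2$-value $\alpha_z$; a step-down argument then extracts $s$ distinct $\delta$'s that are pairwise colored $\alpha_z$ under $\phi_2$, contradicting the property of $\phi_2$. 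The mixed case (one stepping-up color, one inner color) is handled by noting that runs of equal $\delta$'s have length at most $r = \lfloor n/\log n\rfloor$ (else the inner coloring already gives three colors by induction), so among $n \geq rs$ vertices one finds $s$ strictly monotone $\delta$'s and reaches the same contradiction with $\phi_2$. This auxiliary $3$-coloring of pairs of coordinates is the device your plan lacks, and it is precisely what converts ``only two stepping-up colors'' into a contradiction.
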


\noindent For larger values of $t$, we show the following.

\begin{theorem}\label{main}

Given integers $q \geq t \geq 2$, there is an $n_0 =n_0(q,t)$ such that for all integers $n > n_0$,

$$f(n;q,t) \geq 2^{n^{\log(q/(t-1))}/4}.$$

\end{theorem}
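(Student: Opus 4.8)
The target size $2^{n^{\log(q/(t-1))}/4}$ has the shape $2^{(q/(t-1))^{d}}$ with $d\approx\log_2 n$ (since $(q/(t-1))^{\log_2 n}=n^{\log(q/(t-1))}$), which suggests a ``stepping-up'' construction on the subsets of the leaf set of a tree of depth $\approx\log_2 n$ whose average branching factor is $q/(t-1)$. So let $s$ be a branching factor with $s\approx q/(t-1)$ (use $\lfloor q/(t-1)\rfloor$ and $\lceil q/(t-1)\rceil$ at appropriate levels so that the leaf count is $\approx (q/(t-1))^{d}$), let $T$ be such a tree of depth $d$ with leaf set $\Lambda$, $|\Lambda|\ge n^{\log(q/(t-1))}/4$, and let $V=2^{\Lambda}$, ordered by $X\prec Y\iff\max(X\triangle Y)\in Y$ for a fixed ordering of $\Lambda$. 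For any triple $X\prec Y\prec Z$ two of the leaves $\max(X\triangle Y),\max(Y\triangle Z),\max(X\triangle Z)$ coincide, say at $\lambda$, the third is an earlier leaf $\mu$, and one of $X,Y,Z$ is distinguished as the one ``separating at $\lambda$''. Partition the $q$ colours into $t-1$ blocks $B_1,\dots,B_{t-1}$ of size $\ge s$; colour the triple using the depth of the least common ancestor $v$ of $\lambda,\mu$ (this selects a block $B_j$, with indices capped at $t-1$), the unordered pair of children of $v$ lying below $\lambda$ and $\mu$, and the identity of the separating set --- the inequality $q\ge s(t-1)$ (up to the rounding above) is exactly what provides enough colours to record all of this.

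To see that every $\mathcal A\subseteq V$ with $|\mathcal A|=n$ spans at least $t$ colours, enumerate $\mathcal A=\{X_1\prec\cdots\prec X_n\}$ and set $\lambda_i=\max(X_i\triangle X_{i+1})$; by the identity $\max(X_i\triangle X_j)=\max_{i\le\ell<j}\lambda_\ell$, the colour of every triple from $\mathcal A$ is determined by how the leaves $\lambda_1,\dots,\lambda_{n-1}$ are distributed in $T$. Now peel down $T$: if at the current node the $\lambda_i$'s lie in two or more of its subtrees one extracts a colour from the current block (and, when they hit three or more subtrees, a second colour from it), while if they all lie in a single subtree one recurses there, having lost only a factor $s$ from $|\mathcal A|$. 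Since $T$ has depth $\approx\log_2 n$ but only $\log_s n$ lossy steps can occur before $\mathcal A$ collapses to a point, the $\lambda_i$'s must ``spread'' at roughly $\log_2 n-\log_s n\ge t-1$ different depths (this is where one needs $q/(t-1)>2$; smaller ratios give the weaker stated bound more directly), and a short analysis of how the blocks $B_1,\dots,B_{t-1}$ are hit then yields the $t$ distinct colours.

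The main obstacle is precisely this last point --- forcing $t$ \emph{distinct} colours out of the peeling argument --- and it is what dictates the careful colouring above. A crude colouring (for instance one that ignores the child-pair and the separating set, in the manner of the naive lexicographic substitution $f(mn;q,t)\ge f(m;q,t)f(n;q,t)$) only multiplies the clique parameter at each step instead of doubling it, and unrolling such a recursion yields merely a polynomial bound; one must instead, as in the Erd\H os--Hajnal stepping-up lemma, read off enough of the branching data of $T$ from each triple to keep the effective depth at $\log_2 n$ rather than $\log_s n$. This is exactly where the hypothesis that $q$ be at least (a constant times) $t$, rather than just $q\ge t$, is essential, and it is what produces the exponent $\log(q/(t-1))$ rather than some fraction of it. Finally, the finitely many small $n$ left uncovered (which define $n_0(q,t)$) are handled by the first moment method: a uniformly random $q$-colouring of the triples of $[N]$ has in expectation at most $\binom{N}{m}\binom{q}{t-1}\bigl(\tfrac{t-1}{q}\bigr)^{\binom m3}$ subsets of size $m$ spanning $\le t-1$ colours, which is below $1$ as soon as $m^{3}\log\tfrac{q}{t-1}\gtrsim m\log N$, giving a clean base case (and already the bound $f(m;q,t)\ge 2^{cm^2\log(q/(t-1))}$).
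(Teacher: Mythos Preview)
Your approach is genuinely different from the paper's, and as stated it does not go through.

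The paper does not construct an explicit tree-based colouring. It first applies the first-moment bound at the level of \emph{pairs}: there is a $q$-colouring $\phi$ of $\binom{U}{2}$, $|U|=\lfloor (q/(t-1))^{m/4}\rfloor$, in which every $m$-subset of $U$ spans at least $t$ colours (this is where the exponent $\log(q/(t-1))$ enters). Then it performs a single binary step-up: with $V=\{0,1\}^{U}$ and $\delta(u,v)$ the top differing coordinate, set $\chi(v_1,v_2,v_3)=\phi(\delta(v_1,v_2),\delta(v_2,v_3))$. A short induction (your ``peeling'', but now on the standard binary tree) shows that any $n=2^m$ vertices of $V$ determine a set $B\subset U$ of $m$ distinct $\delta$-values such that \emph{every} $\phi$-colour on $\binom{B}{2}$ occurs as the $\chi$-colour of some triple; since $|B|=m$, the defining property of $\phi$ forces $t$ colours.

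The gap in your scheme is that the explicit LCA-based pair-colouring does \emph{not} have the property that every $m$-subset of leaves sees $t$ colours. First, a block of size $s\approx q/(t-1)$ cannot encode the unordered pair of children of an $s$-ary node together with the separating vertex---that is at least $\binom{s}{2}$ options---so the assertion that ``$q\ge s(t-1)$ is exactly what provides enough colours'' is already false. Second, and decisively, take $m$ leaves along a spine: $\lambda_1$ in child~$1$ of the root and $\lambda_2,\dots,\lambda_m$ in child~$2$; then $\lambda_2$ in child~$1$ of that node and $\lambda_3,\dots,\lambda_m$ in child~$2$; and so on. Every pair of these leaves uses the same child-indices $\{1,2\}$, so each block contributes a single colour, and with depths capped at $t-1$ you see only $t-1$ colours in total. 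In your peeling language, every level spreads over exactly two subtrees, so you never collect the promised ``second colour from three or more subtrees''. The missing idea is precisely to replace the deterministic LCA colouring by a random pair-colouring of the leaf set; once you do that (and the tree structure on $U$ becomes irrelevant), you arrive at the paper's proof.
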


Both proofs are based on a stepping-up argument introduced by Erd\H os and Hajnal \cite{EH72}. We start with the proof of Theorem \ref{main} in the next section, as it is a direct application of the stepping-up method.  The proof of Theorem \ref{main2} combines a more general stepping-up argument with induction, and is given in Section~\ref{pfmain2}. Throughout this paper, all logarithms are in base 2.

\section{Forcing many colors}

In this section, we prove Theorem \ref{main}.  We will need the following lemma.

\begin{lemma}\label{graph}

Given integers $q\geq t \geq 2$, there is an integer $m_0$ such that the following holds.  For every $m \geq m_0$, there is a $q$-coloring $\phi$ of the pairs of $U = \{0,1,\ldots, \lfloor (q/(t-1))^{m/4}\rfloor - 1\}$ such that every subset of size $m$ induces at least $t$ distinct colors.

\end{lemma}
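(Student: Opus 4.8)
The plan is to produce $\phi$ by a random coloring and bound the failure probability with a union bound (the first-moment method; see \cite{AS}). Put $N=\lfloor (q/(t-1))^{m/4}\rfloor$ and color each pair of $U$ independently and uniformly at random with one of the $q$ colors. Call an $m$-subset $S\subseteq U$ \emph{bad} if it induces at most $t-1$ colors. If $S$ is bad then there is a set $T$ of exactly $t-1$ colors such that every pair inside $S$ receives a color from $T$, and for a fixed pair $(S,T)$ this has probability $((t-1)/q)^{\binom{m}{2}}$. Summing over the at most $N^m$ choices of $S$ and the $\binom{q}{t-1}$ choices of $T$, the probability that some bad $m$-subset exists is at most
$$N^m\binom{q}{t-1}\left(\frac{t-1}{q}\right)^{\binom{m}{2}}.$$

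It remains to check that this quantity is below $1$ once $m$ is large. Write $\alpha=\log(q/(t-1))$, which is positive because $q\ge t$ forces $q/(t-1)>1$. Taking base-$2$ logarithms and using $\log\binom{q}{t-1}\le q$, it suffices to have $m\log N+q<\binom{m}{2}\alpha$, and since $\log N\le (m/4)\alpha$ this follows from
$$\frac{m^2}{4}\,\alpha+q<\frac{m(m-1)}{2}\,\alpha,$$
i.e. from $q<\alpha\cdot\frac{m(m-2)}{4}$. The right-hand side is a fixed positive multiple of $m(m-2)$, hence exceeds $q$ for all $m$ at least some $m_0=m_0(q,t)$. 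For such $m$ the union bound gives, with positive probability, a coloring in which no $m$-subset is bad, i.e. every $m$-subset induces at least $t$ distinct colors; this is the desired $\phi$.

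Since this is a pure first-moment argument, there is no substantial obstacle; the only point needing care is to confirm that the exponent $m/4$ in the definition of $N$ is genuinely smaller than $\binom{m}{2}/m=(m-1)/2$, which is precisely why the slack $\alpha\cdot\frac{m(m-2)}{4}$ above is positive and eventually dominates the additive constant $q$ (the factor $\binom{q}{t-1}\le 2^q$ is likewise absorbed into this slack, affecting only the value of $m_0$).
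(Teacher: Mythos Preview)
Your proof is correct and follows essentially the same approach as the paper: a uniformly random $q$-coloring together with a first-moment/union-bound computation showing that the expected number of $m$-subsets inducing at most $t-1$ colors is below $1$ for large $m$. The only cosmetic differences are your use of $N^m$ in place of $\binom{N}{m}$ and the bound $\binom{q}{t-1}\le 2^q$ in place of $\binom{q}{t-1}\le q^{t-1}$; if anything, your handling of the inequality $\binom{m}{2}=m(m-1)/2$ is slightly more careful than the paper's.
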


\begin{proof}  Given $q \geq t \geq 2$, let $m_0 = m_0(q,t)$ be a sufficiently large integer that will be determined later.
Color the pairs of  $U = \{0,1,\ldots, \lfloor (q/(t-1))^{m/4}\rfloor$ uniformly independently at random with colors $\{\alpha_1,\ldots, \alpha_q\}$.  Let $X$ denote the number of subsets $A\subset U$ of size $m$ that have less than $t$ distinct colors among their pairs.  Then we have 

$$\mathbb{E}[X] \leq \binom{|U|}{m} \binom{q}{t-1}\left(\frac{t-1}{q}\right)^{\binom{m}{2}} \leq \left(\frac{q}{t-1}\right)^{m^2/4} q^{t-1} \left(\frac{t-1}{q}\right)^{m^2/2} = q^{t-1}\left(\frac{q}{t-1}\right)^{-m^2/4}.$$

\noindent By setting $m_0 = m_0(q,t)$ sufficiently large, we have for all $m \geq m_0$, $\mathbb{E}[X] < 1.$  Hence, there is a $q$-coloring $\phi:\binom{U}{2}\rightarrow \{\alpha_1,\ldots, \alpha_q\}$ such that every subset $A\subset U$ of size $m$ has at least $t$ distinct colors among its pairs.\end{proof}

\begin{proof}[Proof of Theorem \ref{main}] Given $q \geq t\geq 2$, let $n_0 = n_0(q,t)$ be a sufficiently large integer that will be determined later.  Set $M =  \lfloor (q/(t-1))^{m/4}\rfloor$, $U  = \{0,1,\ldots, M-1\}$, and let $\phi:\binom{U}{2} \rightarrow \{\alpha_1,\ldots, \alpha_q\}$ be a $q$-coloring of the pairs of $U$ with the properties described in Lemma \ref{graph}. Set $V = \{0,1,\ldots, 2^M - 1\}$.  In what follows, we will use $\phi$ to define a $q$-coloring $\chi:\binom{V}{3}\rightarrow \{\alpha_1,\ldots, \alpha_q\}$ of the triples of $V$ with the desired properties.

For each $v \in V$, write $v=\sum_{i=0}^{M-1}v(i)2^i$ with $v(i) \in \{0,1\}$ for each $i$. For $u \not = v$, let $\delta(u,v) \in U$ denote the largest $i$ for which $u(i) \not = v(i)$.  Notice that we have the following stepping-up properties (see \cite{GRS})

\begin{description}

\item[Property I:] For every triple $u < v < w$, $\delta(u,v) \not = \delta(v,w)$ .

\item[Property II:] For $v_1 < \cdots < v_r$, $\delta(v_1,v_{r}) = \max_{1 \leq j \leq r-1}\delta(v_j,v_{j + 1})$.

\end{description}

Using $\phi:\binom{U}{2}\rightarrow \{\alpha_1,\ldots, \alpha_q\}$, we define $\chi:\binom{V}{3}\rightarrow \{\alpha_1,\ldots, \alpha_q\}$ as follows.  For vertices $v_1 < v_2 < v_3$ in $V$ and $\delta_i = \delta(v_i,v_{i + 1})$, we define $\chi(v_1,v_2,v_3) = \alpha_j$ if and only if $\phi(\delta_1,\delta_2) = \alpha_j$. We now need the following lemma.

\begin{lemma}\label{stepdown}
For $m\geq 2$ set $n = 2^m$.  Then for any set of $n$ vertices $v_1,\ldots, v_n \in V$, where $v_1 < \cdots < v_n$, there is a subset $B \subset \{\delta(v_i,v_{i + 1}): 1 \leq i \leq n-1\}$ with at least $m$ distinct elements such that for each pair $(\delta_r,\delta_s) \in \binom{B}{2}$, there is a triple $v_i < v_j < v_k$ in $\{v_1,\ldots, v_n\}$ such that $\chi(v_i,v_j,v_k) = \phi(\delta_r,\delta_s)$.
\end{lemma}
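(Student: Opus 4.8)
The goal is to extract from the $n-1$ values $\delta(v_i,v_{i+1})$ a large subset $B\subseteq U$ of "realizable" gaps, where realizable means each pair in $\binom{B}{2}$ is the $(\delta_1,\delta_2)$-pattern of some triple among the $v_i$. The natural approach is to build $B$ by a nested/recursive bracketing of the index interval $\{1,\dots,n\}$, mirroring the classical Erdős–Hajnal stepping-up analysis. Concretely, I would locate the position $i^*$ where $\delta(v_{i},v_{i+1})$ is maximized over all $i$; call this value $\delta^*$. By Property II, $\delta^* = \delta(v_1,v_n)$, and in fact $\delta^*$ is the "top split": every pair $v_a,v_b$ with $a\le i^* < b$ satisfies $\delta(v_a,v_b)=\delta^*$ (again by Property II, since the max gap in the range $[a,b]$ is $\delta^*$). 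This lets me put $\delta^*$ into $B$ and then recurse independently on the two halves $\{v_1,\dots,v_{i^*}\}$ and $\{v_{i^*+1},\dots,v_n\}$: any gap $\delta_r$ chosen from the left half and any $\delta_s$ chosen from either half can be paired with $\delta^*$ via a triple that straddles position $i^*$ (one vertex on each side, plus one more on the appropriate side to realize $\delta_r$ or $\delta_s$ as the inner gap). Setting $n=2^m$, after $m$ levels of halving each piece still has at least one vertex, so this recursion yields $|B|\ge m$; the distinctness of the chosen $\delta$-values follows because the top split of a subinterval is strictly larger than every gap inside it, hence all the recursively chosen values are distinct (this is where Property I / the strict maximality is used).

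More carefully, I would define the recursion on a subinterval $I=\{v_a,\dots,v_b\}$ with $b-a+1\ge 2$: let $\gamma(I)=\max_{a\le i<b}\delta(v_i,v_{i+1})$, achieved at some splitting index; add $\gamma(I)$ to $B$ and recurse on the left and right subintervals (discarding any piece that becomes a singleton). Because we start with $2^m$ vertices, we can carry out $m$ rounds before pieces shrink to size $1$, producing $\ge m$ values; I should pick, at each node of the recursion tree, one leftmost-branch descendant so that the $\binom{B}{2}$ pairs we need to realize all involve either a "straddling" pair of the form (ancestor split, descendant split). The verification that $\chi$ realizes the desired color: if $\delta_r$ is the split of an interval $I$ and $\delta_s$ is the split of a sub-subinterval $I'\subset I$ lying, say, to the right of the $\delta_r$-split, pick $v_i$ just left of the $\delta_r$-split, pick $v_j,v_k$ inside $I'$ straddling the $\delta_s$-split; then $\delta(v_i,v_j)=\delta_r$ and $\delta(v_j,v_k)=\delta_s$ by Property II, so $\chi(v_i,v_j,v_k)=\phi(\delta_r,\delta_s)$ as required. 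The case where $I'$ sits left of the $\delta_r$-split is symmetric (pick $v_i$ just right of the split and $v_j<v_k$ to its left inside $I'$), and the case where $\delta_r,\delta_s$ are incomparable in the tree is handled by their common ancestor.

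The main obstacle I anticipate is bookkeeping: ensuring that the subset $B$ chosen along nested intervals has the property that *every* pair — not just ancestor/descendant pairs — is realizable. If I naively take one value per level I only get a chain, and chains have exactly the ancestor/descendant structure, which is exactly what the straddling argument handles; so the clean move is to insist $B$ be a set of splits along a single root-to-leaf path in the recursion tree, giving a totally ordered (by the "is a split of a larger interval" relation) family of $m$ values, every pair of which is ancestor/descendant. Then the realizability of each pair is immediate from the straddling construction above, and distinctness is immediate from strict maximality at each level. The only thing to double-check is that $n=2^m$ suffices for the path to have length $m$: each split reduces an interval of size $s$ to two intervals whose sizes sum to $s+1$, so the larger piece has size $\ge \lceil (s+1)/2\rceil\ge s/2$, hence after $m-1$ splits along the "larger piece" branch we still have an interval of size $\ge 2$, i.e. $m$ splits total, as needed.
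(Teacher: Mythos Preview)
Your proposal is correct and follows essentially the same approach as the paper: locate the maximum gap $\delta_w$, recurse into the larger of the two resulting halves to obtain $B_0$ with $|B_0|\ge m-1$, set $B=B_0\cup\{\delta_w\}$, and realize each new pair $(\delta_w,\delta_r)$ via a triple straddling position $w$ (the paper simply names the concrete witness $(v_r,v_{r+1},v_{w+1})$, which is exactly your ``straddling'' triple). One harmless slip: the two pieces after a split have sizes summing to $s$, not $s+1$, but your conclusion that the larger piece has size at least $s/2$ is unaffected.
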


\begin{proof}  We proceed by induction on $m$.  The base case $m = 2$ follows from Property I.  For the inductive step, assume that the statement holds for all $m' < m$.  Let $v_1,\ldots, v_n \in V$ such that $v_1 < \cdots  < v_n$ and $n = 2^m$.  Let $\delta_i = \delta(v_i,v_{i +1})$, for $i = 1,\ldots, n-1$.  Set $\delta_{w} = \max\{\delta_i: 1 \leq i \leq n-1\}$ and notice that, by Properties I and II above, $\delta_w > \delta_i$ for all $i\neq w$.  Set $S = \{v_1,\ldots, v_w\}$ and $T = \{v_{w + 1},\ldots, v_n\}$.  Then either $|S|$ or $|T|$ has size at least $2^{m-1}$.  Without loss of generality, we can assume that $|S|\geq 2^{m-1}$ since a symmetric argument would follow otherwise.  By the induction hypothesis, there is a subset $B_0  \subset \{\delta_1,\ldots, \delta_{w-1}\}\subset U$ with at least $m-1$ distinct elements and for each pair $(\delta_r,\delta_s) \in \binom{B_0}{2}$, there is a triple $v_i < v_j < v_k$ in $S$ such that

$$\chi(v_i,v_j,v_k) = \phi(\delta_r,\delta_s).$$

Set $B = \{\delta_w\}\cup B_0$, which implies $|B| \geq m$.  Then notice that for each pair $(\delta_w, \delta_r)$, where $\delta_r \in B_0$, by Property I above, we have

$$\chi(v_r,v_{r + 1},v_{w + 1}) = \phi(\delta_w,\delta_r).$$

\noindent Hence $B\subset U$ has the desired properties, and this completes the proof of the claim.\end{proof}

Set $n_0  = \lceil 2^{m_0}\rceil$ where $m_0$ is defined in Lemma \ref{graph}.  Then for all $n > n_0$ we have $m > m_0$.  Thus, by Lemma \ref{graph} and Lemma \ref{stepdown}, any set of $n$ vertices in $V$ induces at least $t$ distinct colors with respect to $\chi$.  Since $|V| = 2^{(q/(t-1))^{m/4}}$ and $n = 2^m$, we have $|V| = 2^{n^{\log(q/(t-1))}/4}$.\end{proof}

\section{Forcing three colors}\label{pfmain2}

In this section, we prove Theorem \ref{main2}.  We will need the following lemma.

\begin{lemma}\label{3random}

Let $r > 3$ and set $V_3 = \{0,1,\ldots, \lfloor 2^{r^2/24}\rfloor - 1\}$.  Then there is a 3-coloring $\phi_3:\binom{V_3}{3}\rightarrow \{\beta_1,\beta_2, \beta_3\}$ of the triples of  $V_3$ such that every subset of size $r$ induces at least three distinct colors.

\end{lemma}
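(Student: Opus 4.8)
The plan is a direct first-moment computation, in the same spirit as the proof of Lemma~\ref{graph}. First I would dispose of small $r$: if $|V_3| = \lfloor 2^{r^2/24}\rfloor < r$ there is no $r$-element subset at all, so the statement holds vacuously for any coloring. Since $2^{r^2/24} < r$ (hence $\lfloor 2^{r^2/24}\rfloor < r$) for every integer $r$ with $3 < r \le 8$, we may assume $r \ge 9$ from now on.

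For $r \ge 9$, color the triples of $V_3$ independently and uniformly at random from $\{\beta_1,\beta_2,\beta_3\}$, and let $X$ count the $r$-element subsets $A \subseteq V_3$ whose triples receive at most two distinct colors. Such an $A$ is ``bad'' exactly when all of its triples avoid some one color, so a union bound over the $\binom{3}{2}=3$ two-element subsets of colors gives
$$\mathbb{E}[X] \le \binom{|V_3|}{r}\binom{3}{2}\left(\frac{2}{3}\right)^{\binom{r}{3}} \le 2^{r^3/24}\cdot 3\cdot 2^{-\log(3/2)\binom{r}{3}},$$
where I used $\binom{|V_3|}{r} \le |V_3|^r \le 2^{r^3/24}$ and $(2/3)^{\binom{r}{3}} = 2^{-\log(3/2)\binom{r}{3}}$.

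It then remains to check that the exponent $r^3/24 + \log 3 - \log(3/2)\binom{r}{3}$ is negative. Here I would use $\binom{r}{3} = r(r-1)(r-2)/6 \ge r^3/12$ (valid for all $r \ge 6$) together with $\log(3/2) = \log 3 - 1 > 1/2$, which already yields $\log(3/2)\binom{r}{3} > r^3/24$; quantitatively the slack $\bigl(\log(3/2)/12 - 1/24\bigr)r^3$ exceeds $\log 3$ once $r \ge 9$, so $\mathbb{E}[X] < 1$. Hence some $3$-coloring $\phi_3$ has $X = 0$, i.e.\ every $r$-element subset of $V_3$ induces all three colors. The only point that needs a moment's care is confirming that the exponent constant $1/24$ is comfortably beaten --- which is exactly what $\log(3/2) > 1/2$ and $\binom{r}{3} \ge r^3/12$ supply --- and that the range of $r$ where this estimate would be too weak is precisely the range where the statement is vacuous; there is no genuine obstacle beyond this bookkeeping.
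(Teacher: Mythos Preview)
Your proposal is correct and is exactly the first-moment argument the paper has in mind; the paper itself omits the proof entirely, pointing to Lemma~\ref{graph}. Your extra bookkeeping for small $r$ (noting that $|V_3|<r$ for $4\le r\le 8$ makes the statement vacuous) and your numerical check that $\log(3/2)>1/2$ together with $\binom{r}{3}\ge r^3/12$ suffice for $r\ge 9$ are the right way to pin down the constant $1/24$.
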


\noindent We omit the proof of Lemma \ref{3random} as it follows by the same probabilistic argument used for Lemma~\ref{graph}.  Hence, Lemma \ref{3random} implies that $f(n;3,3) \geq 2^{n^2/24}.$  Together with the following recursive formula, Theorem \ref{main2} quickly follows.

\begin{theorem}\label{base}

For integers $n > q  \geq 9$, we have

$$f(n;q,3) \geq \left(f(\lfloor n/\log n\rfloor, q-6,3)\right)^{n^{1/4}/2}.$$
\end{theorem}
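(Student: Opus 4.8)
The plan is to mimic the stepping-up construction from the proof of Theorem~\ref{main}, but now stepping up from a \emph{triple} coloring rather than a pair coloring, and feeding the ``inner'' color classes with a recursively-obtained good coloring. Set $N = f(\lfloor n/\log n\rfloor, q-6, 3)$ and let $\psi$ be a $(q-6)$-coloring of the triples of a set $W$ of size $N$ in which every $\lfloor n/\log n\rfloor$-subset spans $3$ colors. We want a $q$-coloring $\chi$ of the triples of $V = \{0,1,\dots,2^{M}-1\}$ for a suitable $M$ of the form roughly $N^{n^{1/4}/2}$ (so we will take $M$ to be about $\log N \cdot n^{1/4}/2$, and want $|V| = 2^M$); actually we want $|V| = N^{n^{1/4}/2}$, so set $M = \tfrac12 n^{1/4}\log N$ and $V=\{0,\dots,2^M-1\}$. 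The index set $U=\{0,\dots,M-1\}$ of bit-positions should itself carry a good $3$-coloring of \emph{triples} (via Lemma~\ref{3random} applied with $r \approx n^{1/8}$ or so, chosen so $M \ge 2^{r^2/24}$ forces $3$ colors on every $r$-subset of $U$), using three fresh colors $\beta_1,\beta_2,\beta_3$. Simultaneously we need a good coloring of $W$ indexed over something of size $N$; here is where the recursion enters.

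The coloring $\chi$ on a triple $v_1<v_2<v_3$ is defined using the two ``differences'' $\delta_1=\delta(v_1,v_2)$, $\delta_2=\delta(v_2,v_3)$ in $U$, exactly as before, but now with a refined rule: by Property~I we have $\delta_1\ne\delta_2$, so $\{\delta_1,\delta_2\}$ is a genuine pair in $U$; we then also look at the triple structure. The idea is to split the palette $\{\alpha_1,\dots,\alpha_q\}$ into a block of $3$ colors used to record the ``$U$-side'' information and a block of $q-6$ colors used to record the recursively-colored ``$W$-side'' information, with $3$ more colors as slack for the stepping-up bookkeeping — this is where the ``$q-6$'' and the requirement $q\ge 9$ come from. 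Concretely I would: (i) use Lemma~\ref{3random} to get $\phi_3$ on $U$ (or on a cleverly chosen auxiliary set derived from ordered triples of bit-positions), guaranteeing that among any sufficiently large collection of $\delta$'s we already see $3$ colors, \emph{unless} that collection is tiny; and (ii) when the set of $\delta$'s arising from an $n$-subset of $V$ is small, invoke the stepping-up lemma (analogous to Lemma~\ref{stepdown}) to extract from the $n$-subset a sub-configuration that is ``monochromatic in the $U$-coordinate'' but on which $\chi$ faithfully reproduces a triple-coloring $\psi$ of a set of size $\ge \lfloor n/\log n\rfloor$, so the recursive hypothesis on $f(\lfloor n/\log n\rfloor,q-6,3)$ supplies the $3$ colors.

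The key quantitative step is the dichotomy and the choice of parameters making it work. Given $v_1<\dots<v_n$ in $V$, form the sequence $\delta_i=\delta(v_i,v_{i+1})\in U$. If these $\delta_i$ take at least $r$ distinct values, then by the stepping-up lemma (the triple analogue of Lemma~\ref{stepdown}) there is a subset $B\subseteq U$ of $\ge r$ distinct differences such that every \emph{triple} from $B$ is realized as $\chi$ of some triple of the $v_i$; applying $\phi_3$ on $U$ then forces $3$ distinct colors. If instead the $\delta_i$ take fewer than $r$ distinct values, then by pigeonhole some value $\delta^\ast$ is hit by at least $(n-1)/r$ of the consecutive pairs; the corresponding ``long run'' of vertices sharing the top differing bit behaves, under $\chi$, like a smaller instance of the same problem on a ground set of size related to the next block of bits, and recursively we land on a set of size $\ge \lfloor n/\log n\rfloor$ carrying (a copy of) $\psi$, which spans $3$ colors by definition of $f(\cdot,q-6,3)$. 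Choosing $r\approx n^{1/8}$ (so that $2^{r^2/24}\le M = \tfrac12 n^{1/4}\log N$, which holds once $n$ is large) and checking $(n-1)/r \ge n/\log n$ makes both branches go through.

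The main obstacle I anticipate is designing the coloring rule $\chi$ so that a \emph{single} $q$-coloring simultaneously (a) encodes a $3$-coloring of triples of $U$ via the pair $(\delta_1,\delta_2)$ together with enough of the triple's fine structure, and (b) on every long monochromatic-in-$\delta$ run faithfully transmits the recursive $(q-6)$-coloring $\psi$ — with the two encodings living on disjoint parts of the palette and not interfering, which is exactly what eats up $6$ colors and forces the ``$q-6$'' and the hypothesis $q\ge 9$. The other delicate point is the stepping-up lemma in the triple setting: Lemma~\ref{stepdown} produced, for every \emph{pair} from $B$, a realizing triple; here we need, for every \emph{triple} from $B$, a realizing triple of the $v_i$, which requires a more careful induction (splitting at the maximum difference and combining a realized triple inside one side with the two boundary differences) — but the paper's Properties~I and II are exactly what make this possible, so I expect it to go through with the same inductive skeleton, just with one more index to track.
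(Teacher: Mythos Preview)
Your outline has a real structural gap: the coloring $\chi$ you sketch cannot actually be defined. A triple $v_1<v_2<v_3$ produces only the \emph{pair} $(\delta_1,\delta_2)=(\delta(v_1,v_2),\delta(v_2,v_3))$ of bit-positions, so there is no way for $\chi$ to read off a $3$-coloring $\phi_3$ of \emph{triples} of $U$; your ``triple analogue of Lemma~\ref{stepdown}'' asks that every triple from $B\subset U$ be realized by some triple of the $v_i$, but a triple of vertices never realizes a triple in $U$. Equally fatal is the placement of the recursive $(q-6)$-coloring $\psi$: in the binary construction Property~I forbids $\delta_1=\delta_2$, so the ``long run of vertices sharing the top differing bit'' you invoke does not exist, and there is no copy of the set $W$ of size $N=f(\lfloor n/\log n\rfloor,q-6,3)$ anywhere in your picture on which $\psi$ could act. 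You have correctly guessed the numerology $|V|=N^{n^{1/4}/2}$ and the $6$-color deficit, but not the mechanism.

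The paper's idea that you are missing is to abandon binary and write $v\in V$ in base $N_3:=N$, with $N_2:=\lfloor 2^{s/4}\rfloor\approx n^{1/4}$ digits, each digit lying in $V_3=\{0,\dots,N_3-1\}$. Now Property~I fails --- one \emph{can} have $\delta_1=\delta_2=j$ --- but in that case $v_1(j)<v_2(j)<v_3(j)$ is a genuine triple in $V_3$, and $\chi$ can be set to $\psi(v_1(j),v_2(j),v_3(j))$, using the $q-6$ recursive colors. When $\delta_1\ne\delta_2$ one uses a $3$-coloring $\phi_2$ of the \emph{pairs} of the (small) position set $V_2$ from Lemma~\ref{2random}, together with the sign of $\delta_1-\delta_2$, which accounts for the $3\times 2=6$ extra colors. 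The case analysis then splits according to which two of the three blocks $\{\gamma_1,\gamma_2,\gamma_3\}$, $\{\gamma_4,\gamma_5,\gamma_6\}$, $\{\gamma_7,\dots,\gamma_q\}$ the two surviving colors fall into; the argument you had in mind (Lemma~\ref{stepdown} plus a pigeonhole to drop into the recursion) is essentially what handles the mixed cases, but it only works once the base-$N_3$ representation has created a home for $\psi$.
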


\noindent We will also need the following lemma, whose proof is also omitted since it follows from the same probabilistic argument as in Lemma \ref{graph}.

\begin{lemma}\label{2random}

Let $s > 3$ and set $V_2 = \{0,1,\ldots , \lfloor 2^{s/4}\rfloor\}$.  Then there is a 3-coloring $\phi_2:\binom{V_2}{2}\rightarrow \{\alpha_1,\alpha_2,\alpha_3\}$ of the pairs of $V_2$ such that every subset of size $s$ induces at least three distinct colors.

\end{lemma}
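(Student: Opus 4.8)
The plan is to use the first-moment method, exactly as in the proof of Lemma~\ref{graph} (and of Lemma~\ref{3random}), specialized to the case $q=t=3$ but applied to the slightly larger ground set $V_2$. Color each pair of $V_2$ independently and uniformly at random with one of $\alpha_1,\alpha_2,\alpha_3$, and let $X$ denote the number of $s$-element subsets $A\subseteq V_2$ whose pairs receive at most two distinct colors. Such an $A$ has all of its $\binom{s}{2}$ pairs colored from one of the $\binom{3}{2}=3$ two-element subsets of $\{\alpha_1,\alpha_2,\alpha_3\}$, so by a union bound over these three palettes together with the independence of the pair colors, $\mathbb{P}[A\text{ is bad}]\le 3\,(2/3)^{\binom{s}{2}}$. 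By linearity of expectation,
$$\mathbb{E}[X]\;\le\;\binom{|V_2|}{s}\cdot 3\left(\frac{2}{3}\right)^{\binom{s}{2}}\;\le\;|V_2|^{\,s}\cdot 3\left(\frac{2}{3}\right)^{\binom{s}{2}}.$$
If $\mathbb{E}[X]<1$, then some coloring achieves $X=0$, i.e. every $s$-subset of $V_2$ induces all three colors, which is the assertion.

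It then remains to check that $\mathbb{E}[X]<1$. Since $|V_2|=\lfloor 2^{s/4}\rfloor+1$, taking logarithms base $2$ gives
$$\log_2\mathbb{E}[X]\;\le\;\log_2 3\;+\;s\log_2\!\big(2^{s/4}+1\big)\;-\;\binom{s}{2}\log_2\tfrac32.$$
For large $s$ the right-hand side is dominated by $\big(\tfrac14-\tfrac12\log_2\tfrac32\big)s^2=\tfrac{3-2\log_2 3}{4}\,s^2$, whose coefficient is negative because $2\log_2 3=\log_2 9>3$. Hence $\mathbb{E}[X]\to 0$, so $\mathbb{E}[X]<1$ for all sufficiently large $s$; in fact, keeping the factor $|V_2|\le 2^{s/4}+1$ (rather than the crude $2^{s/4+1}$) makes the bound $\mathbb{E}[X]<1$ hold already for the first value of $s$ at which $V_2$ has at least $s$ elements, and for every smaller $s$ one has $\lfloor 2^{s/4}\rfloor+1<s$, so $V_2$ contains no $s$-subset at all and the statement is vacuous. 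This covers every $s>3$.

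I do not expect any genuine obstacle here, since this is the same first-moment computation already invoked three times in the paper; the only step requiring (mild) care is confirming that $\mathbb{E}[X]<1$ for every $s>3$ and not merely asymptotically. That reduces to the numerical fact that $\log_2(3/2)>1/2$ — so that the savings $(2/3)^{\binom{s}{2}}=2^{-(\log_2 3-1)\binom{s}{2}}$ outpaces $|V_2|^s\approx 2^{s^2/4}$, which is exactly why the exponent $s/4$ in the size of $V_2$ was chosen — together with the observation that $V_2$ has fewer than $s$ points for small $s$ and hence nothing to verify there.
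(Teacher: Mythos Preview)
Your proof is correct and is precisely the first-moment argument the paper has in mind; indeed, the paper omits the proof of this lemma entirely, noting only that it follows from the same probabilistic argument as Lemma~\ref{graph}. Your additional care in handling every $s>3$ (via the observation that $|V_2|<s$, and hence the statement is vacuous, for small $s$) goes slightly beyond what the paper records.
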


\begin{proof}[Proof of Theorem \ref{base}]  Given $n > q \geq 9$, let $r = \lfloor n/\log n\rfloor$ and $s = \lfloor \log n\rfloor$.  Set $N_2 = \lfloor 2^{s/4}\rfloor$, $N_3 = f(r;q-6,3)$, and 

$$V_2 = \{0,1,\ldots, N_2 - 1\}\hspace{.5cm}\textnormal{ and }\hspace{.5cm}V_3 = \{0,1,\ldots, N_3 - 1\}.$$  

Using Lemma \ref{2random}, we obtain $\phi_2:\binom{V_2}{2}\rightarrow \{\alpha_1,\alpha_2,\alpha_3\}$ such that every subset of $V_2$ of size $s$ induces at least three colors.  Likewise, by definition of $f(r, q-6,3)$, we obtain  $\phi_3:\binom{V_3}{3}\rightarrow \{\beta_1,\ldots,\beta_{q-6}\}$ such that every subset of $V_3$ of size $r$ induces at least three distinct colors.   We now apply the following more general stepping-up procedure.

Set $N =N_3^{N_2}$ and $V = \{0,1,\ldots, N-1\}$.  For each $v \in V$, write $v=\sum_{i=0}^{N_2-1}v(i)(N_3)^i$ with $v(i) \in V_3$ for each $i$. For $u,v \in V$ with $u < v$, let $\delta(u,v) \in V_2$ denote the largest $i$ for which $u(i) \not = v(i)$.  Notice that we no longer have Property I from the previous stepping-up procedure, but we do have the following properties.

\begin{description} 

\item[Property II:] For $v_1 < \cdots < v_r$, $\delta(v_1,v_{r}) = \max_{1 \leq j \leq r-1}\delta(v_j,v_{j + 1})$.

\item[Property III:] For $v_1 < v_2 < v_3$ such that $\delta(v_1,v_2) = \delta(v_2,v_3) = i$, $v_1(i) < v_2(i) < v_3(i)$.

\end{description}

Using $\phi_2$ and $\phi_3$, we define $\chi:\binom{V}{3}\rightarrow \{\gamma_1,\ldots, \gamma_q\}$ as follows.  For vertices $v_1 < v_2 < v_3$ in $V$, let $\delta_1 = \delta(v_1,v_2)$ and $\delta_2 = \delta(v_2,v_3)$.   Then for $i \in \{1,2,3\}$,

\begin{itemize}

    \item set $\chi(v_1,v_2,v_3) = \gamma_i$ if and only if $\delta_1 > \delta_2$ and $\phi_2(\delta_1,\delta_2) = \alpha_i,$
    
      \item set $\chi(v_1,v_2,v_3) = \gamma_{3 + i}$ if and only if $\delta_1 < \delta_2$ and $\phi_2(\delta_1,\delta_2) = \alpha_i,$
    
 \end{itemize}
 
 \noindent and for $i \in \{1,\ldots, q-6\}$,
 
 \begin{itemize}
       
\item set $\chi(v_1,v_2,v_3) = \gamma_{6 + i}$ if and only if $\delta_1 = \delta_2  = j$ and $\phi_3(v_1(j), v_2(j), v_3(j)) = \beta_i,$

\end{itemize}

Notice that $n \geq \max\{s\cdot r, 2^s\}$.  We claim that any set of $n$ vertices $v_1,\ldots, v_n \in V$ induces at least 3 distinct colors with respect to $\chi$.  For sake of contradiction, let $A = \{v_1,\ldots, v_n\}\subset V$ such that $v_1 < \cdots < v_n$ and $\chi(v_i,v_j,v_k) \in\{\gamma_x,\gamma_y\}$ for all triples $(v_i, v_j,v_k) \in \binom{A}{3}$.  Set $\delta_i = \delta(v_i,v_{i + 1})$ for $i = 1,\ldots, n-1$. The proof now falls into the following cases.

\medskip

\noindent \emph{Case 1.}  Suppose $\gamma_x,\gamma_y \in \{\gamma_1,\gamma_2,\gamma_3\}$.  Then we have $\delta_1 > \delta_2 > \cdots > \delta_{n-1}$.  However, $\delta_i \in U  = \{0,1,\ldots, \lfloor 2^{s/4}\rfloor - 1\}$ and $n = 2^{s}$ which is a contradiction. A similar argument follows if $\gamma_x,\gamma_y  \in \{\gamma_4,\gamma_5,\gamma_6\}$.

\medskip

\noindent \emph{Case 2.}  Suppose $\gamma_x,\gamma_y \in \{\gamma_7,\ldots, \gamma_{q-6}\}$.  Then we must have $\delta_1 = \cdots = \delta_{n-1} = i$ and $v_1(i)  < \cdots < v_{n-1}(i)$.  Since $n \geq r$, by definition of $\chi$ and $\phi_3$, the set $\{v_1,\ldots, v_n\}$ induces at least three distinct colors, contradiction.

\medskip

\noindent \emph{Case 3.} Suppose $\gamma_x \in \{\gamma_1,\gamma_2,\gamma_3\}$ and $\gamma_y \in \{\gamma_4,\gamma_5,\gamma_6\}.$  Then in this case, for any triple $v_i < v_j < v_k$, we have $\delta(v_i,v_j) \neq \delta(v_j,v_k)$ and $\phi_2(\delta(v_i,v_j) ,\delta(v_j,v_k)) = \alpha_z$ for some fixed $z$.   Set $\delta_{w} = \max\{\delta_i: 1 \leq i \leq n-1\}$ and notice that, by Property II above, $\delta_w > \delta_i$ for all $i\neq w$. Therefore, a straight-forward adaptation of Lemma \ref{stepdown} gives us the following claim.

\begin{claim}
For $s\geq 2$, any set of $2^s$ vertices $v_1,\ldots, v_{2^s} \in V$, with the properties described above, there is a subset $B \subset \{\delta(v_i,v_{i + 1}): 1 \leq i \leq 2^s-1\}$ with at least $s$ distinct elements such that $\phi_2(\delta_i,\delta_j) = \alpha_z$ for every pair $(\delta_i,\delta_j) \in \binom{B}{2}$.

\end{claim}

\noindent However, this contradicts Lemma \ref{2random}.
\medskip

\noindent \emph{Case 4.}  Suppose $\gamma_x \in \{\gamma_1,\ldots, \gamma_6\}$ and $\gamma_y \in \{\gamma_7,\ldots, \gamma_q\}$.  Without loss of generality, we can assume that $\gamma_x = \gamma_1$ and $\gamma_y = \gamma_7$ since a symmetric argument would follow otherwise.  Notice that there is an integer $w_1 \in \{1,\ldots, r\}$ such that $\delta(v_1,v_{w_1}) > \delta(v_{w_1},v_{w_1 + 1})$.  Indeed, otherwise if $\delta_1 = \cdots =\delta_r$, by the definition of $\chi$ and the properties of $\phi_3$ described above, the set $\{v_1,\ldots, v_r\}$ induces at least three distinct colors with respect to $\chi$, contradiction.

The same argument shows that there must be an integer $w_2 \in \{w_1+1\ldots, w_1 + r\}$ such that $\delta(v_{w_1},v_{w_2}) > \delta(v_{w_2},v_{w_2  + 1})$.  Since $n \geq s\cdot r$, a repeated application of the argument above shows that there are integers $w_1 < \cdots < w_{s-1}$, such that

$$\delta(v_1,v_{w_1}) > \delta(v_{w_1},v_{w_2}) > \delta(v_{w_2},v_{w_3}) > \cdots >  \delta(v_{w_{s-1}},v_{w_{s - 1} + 1}).$$
 
\noindent By Property II, $\chi$ colors every triple in $\{v_1,v_{w_1},\ldots, v_{w_{s-1}}, v_{w_{s-1} + 1}\}$ with color $\gamma_1$.  However, this implies that the set 

$$S = \{\delta(v_1,v_{w_1}),\delta(v_{w_1},v_{w_2}) , \ldots , \delta(v_{w_{s-2}},v_{w_{s-1}}), \delta(v_{w_{s-1}},v_{w_{s - 1} + 1})\}\subset U,$$

\noindent has the property that $|S| = s$ and $\phi_2:\binom{S}{2} \rightarrow \alpha_1$, which is a contradiction.  Since $|V| = N_3^{N_2},$

$$f(n;q,3) \geq |V| \ge  \left(f(\lfloor n/\log n\rfloor;q-6,3)\right)^{n^{1/4}/2}.$$

\noindent This completes the proof of Theorem \ref{base}.\end{proof}

Combining Theorem \ref{base} with the fact that $f(n;3,3) > 2^{n^2/24}$ gives the following.

\begin{theorem}
For fixed $q \geq 3$ and for all $n  > 3$ we have

$$f(n;q,3) > 2^{n^{2 + \frac{1}{4}\left\lfloor\frac{q-3}{6}\right\rfloor - o(1)}}.$$

\end{theorem}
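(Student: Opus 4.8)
The plan is to iterate Theorem~\ref{base} starting from the base case $f(n;3,3) > 2^{n^2/24}$ supplied by Lemma~\ref{3random}. Since Theorem~\ref{base} reduces the color count by $6$ each time it is applied, and since the base case requires $q=3$, we may apply it exactly $k := \lfloor (q-3)/6 \rfloor$ times before the number of colors drops to $q - 6k \in \{3,\ldots,8\}$, at which point we stop and invoke the base bound (noting $f$ is monotone nondecreasing in $q$, so $f(\cdot;q-6k,3) \ge f(\cdot;3,3)$). Each application also shrinks the set size from $n$ to roughly $n/\log n$ and raises the whole quantity to the power $n^{1/4}/2$. So after $k$ steps we will have roughly
$$
f(n;q,3) \;\ge\; \Bigl(f(n_k;3,3)\Bigr)^{\prod_{j=0}^{k-1} n_j^{1/4}/2},
$$
where $n_0 = n$ and $n_{j+1} = \lfloor n_j/\log n_j\rfloor$, because the exponents multiply when the recursion is unwound.

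The key steps, in order, are as follows. First I would track the sizes $n_j$: since $k$ is a constant (depending only on $q$) and $n_{j+1} \ge n_j/\log n_j$, a trivial induction gives $n_j \ge n / (\log n)^j \ge n^{1 - o(1)}$ for each fixed $j \le k$, so in particular $n_j^{1/4} \ge n^{1/4 - o(1)}$ and $n_k \ge n^{1-o(1)}$; here the $o(1)$ terms vanish as $n \to \infty$ with $q$ fixed. Second, I would unwind the recursion: writing $g_j := f(n_j; q - 6j, 3)$, Theorem~\ref{base} gives $g_j \ge g_{j+1}^{\,n_j^{1/4}/2}$ for $j = 0, \ldots, k-1$, hence
$$
f(n;q,3) = g_0 \;\ge\; g_k^{\;E}, \qquad E := \prod_{j=0}^{k-1} \frac{n_j^{1/4}}{2} \;=\; 2^{-k}\prod_{j=0}^{k-1} n_j^{1/4} \;\ge\; n^{k/4 - o(1)}.
$$
Third, I would bound the base term: $g_k = f(n_k; q-6k, 3) \ge f(n_k; 3, 3) > 2^{n_k^2/24} \ge 2^{\,n^{2 - o(1)}}$. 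Finally, combining, $f(n;q,3) \ge \bigl(2^{n^{2-o(1)}}\bigr)^{n^{k/4-o(1)}} = 2^{\,n^{2 + k/4 - o(1)}}$, which is exactly the claimed bound with $k = \lfloor (q-3)/6 \rfloor$, after absorbing all the $o(1)$ contributions (there are only constantly many of them) into a single $o(1)$ in the exponent.

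The main obstacle — really the only thing requiring care rather than bookkeeping — is verifying that the hypotheses of Theorem~\ref{base} remain valid at every level of the recursion. Theorem~\ref{base} requires $n_j > q - 6j \ge 9$ at step $j$; since $q - 6j$ stays at least $9$ until the last step (by choice of $k$) and $n_j \to \infty$, this holds for all $n$ larger than some threshold $n_1(q)$, consistent with the statement's hypothesis $n > 3$ once one checks small cases or simply accepts the $o(1)$ slack. One must also confirm that the floor functions in $n_{j+1} = \lfloor n_j/\log n_j\rfloor$ and in the exponent $n^{1/4}/2$ do not erode the bound: since we only iterate a constant number of times, each floor costs at most a bounded multiplicative factor in $n_j$ and hence only an $o(1)$ loss in the final exponent, so no genuine difficulty arises. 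Once these routine checks are in place, the displayed chain of inequalities yields the theorem.
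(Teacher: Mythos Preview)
Your proposal is correct and follows exactly the approach the paper intends: iterate Theorem~\ref{base} a total of $k=\lfloor(q-3)/6\rfloor$ times down to $q-6k\in\{3,\dots,8\}$, invoke the base bound $f(n;3,3)>2^{n^2/24}$ from Lemma~\ref{3random} together with monotonicity in $q$, and absorb the finitely many $\log n$ and constant factors into the $o(1)$ term. The paper itself records this only as a one-line remark (``Combining Theorem~\ref{base} with the fact that $f(n;3,3) > 2^{n^2/24}$''), and you have simply written out the routine bookkeeping it leaves implicit.
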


\end{document}